\documentclass[12pt]{article}
\usepackage{babel}
\usepackage{graphicx}
\usepackage{amsmath}
\usepackage{amsfonts}
\usepackage{amssymb}
\usepackage{amsthm}
\usepackage{hyperref}
\usepackage[T1]{fontenc}
\usepackage{color}
\usepackage{mathtools}
\usepackage{comment}
\usepackage{multicol}

\input xy
\xyoption{all}

\usepackage{tensor}

\newtheorem{thm}{Theorem}[section]
\newtheorem{cor}[thm]{Corollary}
\newtheorem{lem}[thm]{Lemma}
\newtheorem{prop}[thm]{Proposition}

\theoremstyle{definition}

\newtheorem{exe}[thm]{Example}
\newtheorem{ques}[thm]{Question}

\newtheoremstyle{remarque}{}{}{}{}{\it}{.}{\newline}{}
\theoremstyle{remarque}
\newtheorem*{rem}{Remark}

\usepackage[OT2,T1]{fontenc}
\DeclareSymbolFont{cyrletters}{OT2}{wncyr}{m}{n}
\DeclareMathSymbol{\Sha}{\mathalpha}{cyrletters}{"58}
\DeclareMathSymbol{\Brusse}{\mathalpha}{cyrletters}{"42}

\newcommand{\Z}{\mathbb{Z}}
\newcommand{\Q}{\mathbb{Q}}
\newcommand{\R}{\mathbb{R}}
\newcommand{\C}{\mathbb{C}}

\newcommand{\id}{\mathrm{id}}

\renewcommand{\hom}{\mathrm{Hom}}
\newcommand{\coker}{\mathrm{coker}}

\newcommand{\aut}{\mathrm{Aut}}

\newcommand{\out}{\mathrm{Out}}

\renewcommand{\int}{\mathrm{Int}\,}
\newcommand{\inn}{\mathrm{int}}

\newcommand{\gal}{\mathrm{Gal}}

\newcommand{\Sfrak}{\mathfrak{S}}
\newcommand{\Afrak}{\mathfrak{A}}

\newcommand{\spec}{\mathrm{Spec}\,}

\renewcommand{\H}{\mathrm{H}}

\newcommand{\gm}[1]{\mathbb{G}_{\mathrm{m},#1}}

\newcommand{\sln}{\mathrm{SL}_n}

\newcommand{\tw}{\mathrm{Tw}}

\usepackage{marvosym}

\title{Real approximation for homogeneous spaces with finite stabilizers}
\author{David Harari, Nguy\~{\^e}n M\d{a}nh Linh, and Giancarlo Lucchini Arteche}

\date{}

\begin{document}

\maketitle

\begin{abstract}
We prove some new cases of real appoximation for homogeneous spaces with finite stabilizers and describe the state of the art around this question, giving proofs that are well-known to experts but that, to our knowledge, cannot be found in the literature. Our main new result needs the latest advances in the topic of the Brauer--Manin obstruction for homogeneous spaces with supersolvable stabilizers. It states that any finite $k$-group that is split by a $2$-primary extension satisfies real approximation.\\

\noindent{\bf Keywords :} Real approximation, homogeneous spaces, Brauer--Manin obstruction. \\
{\bf MSC classes:} 14G12, 14M17, 12G05.
\end{abstract}

\section{Introduction}

Let $k$ be a number field and let $\Omega_\R$ be the set of \emph{real} places of $k$. For $v\in\Omega_\R$, denote by $k_v$ the corresponding completion, which is isomorphic to $\R$. The different inclusions $k\hookrightarrow k_v$ correspond to the different embeddings of $k$ in the field of real numbers.

Let $A$ be a finite $k$-group scheme, which will be henceforth refered to as a \emph{$k$-group}. Fixing an algebraic closure $\bar k$ of $k$, we may and will interpret $A$ as an abstract finite group (which would be $A(\bar k)$) equipped with a continuous action of the Galois group $\Gamma:=\gal(\bar k/k)$. Here, continuity implies that the action factors through a finite quotient $\gal(L/k)$ for some finite Galois extension $L/k$.

Such a finite $k$-group can always be embedded into $\sln$ for $n$ big enough. Fix such an embedding and define $X$ as the quotient variety $\sln/A$, which is a homogeneous space for $\sln$. We have then a natural diagonal inclusion
\begin{equation}\label{eqn diag map X}
X(k)\hookrightarrow\prod_{v\in\Omega_\R}X(k_v),
\end{equation}
and it is natural to ask the following.
\begin{ques}[Real approximation for $X=\sln/A$]\label{question X}
Is the image of \eqref{eqn diag map X} dense when $X(k_v)$ is equipped with the natural real topology?
\end{ques}
An equivalent question, asked by Borovoi on MathOverflow (see \cite{BorovoiMO}), is the following.
\begin{ques}[Real approximation for $A$]\label{question H1}
Is the diagonal localization map in nonabelian cohomology
\begin{equation}\label{eqn diag map H1}
\H^1(k,A)\to \prod_{v\in \Omega_\R} \H^1(k_v,A),
\end{equation}
surjective?
\end{ques}
See \cite[\S1.2]{HarariBulletinSMF}, \cite[\S1]{GLA-AFPHEH}, or \cite[Lem. 3.7]{Linh-Padic} for an argument establishing the equivalence between the two questions.\\

These questions are a particular case of the property known as \emph{weak approximation} for homogeneous spaces, which considers an arbitrary finite set of places of $k$ instead of just the real places and an arbitrary homogeneous space $X=G/H$ of a linear algebraic $k$-group $G$. It is well-known that there exists an obstruction to this property, known as the \emph{Brauer--Manin obstruction}. A famous question/conjecture by Colliot-Th\'el\`ene \cite[Introduction]{ColliotBudapest} asks whether this obstruction is the only one. On the other hand, the third author proved in \cite{GLA-Brnr} that the Brauer--Manin obstruction does not interact with real places, so that the conjecture (which hasn't been settled in general) does imply a positive answer to Questions \ref{question X} and \ref{question H1}.\\

The goal of this note is to settle some cases of this question with a positive answer and describe the state of the art around this question. Some cases can be dealt with using very elementary arguments, stemming essentially from basic nonabelian cohomology and classical Number Theory. Others need the latest advances in the topic of the Brauer--Manin obstruction for homogeneous spaces, in particular the powerful results by Harpaz and Wittenberg in \cite{HW}. Our main result (Theorem \ref{thm: main}) states that any $k$-group that is split by a $2$-primary extension satisfies real approximation. We also give a result for certain antisolvable groups (Theorem \ref{thm: Antisolvable}).

It is worth noting that Julian Demeio recently announced a proof of Colliot-Th\'el\`ene's conjecture for all homogeneous spaces $SL_n/H$ with $H$ a solvable $k$-group \cite[Theorem 1.1]{Demeio2026Solvable}, which would imply a positive answer to Question \ref{question H1} for all solvable $A$, with no assumption on the extension splitting the group. Such a result would also shorten some of the proofs here below, but we make no use of it in this note.

\paragraph*{Acknowledgements.} We thank Mikhail Borovoi for his useful comments. In particular for pointing out the origin of the reference to Serre for real approximation for tori.

\section{Classical, well-known results}
As a warm-up, let us establish first some basic cases of this question.

\subsection{The group $A$ is constant and $|\Omega_\R|=1$}
Recall that that the $k$-group $A$ is said to be constant if the action of $\Gamma$ on $A$ is trivial. Then the map from Question \ref{question H1} becomes
\[(\hom(\Gamma,A)/\sim)\to \prod_{v\in\Omega_\R}(\hom(\Gamma_v,A)/\sim),\]
where $\Gamma_v:=\gal(\bar k_v/k_v)\simeq\Z/2\Z$ and $\sim$ relates two (continuous) homomorphisms $f,g$ if one can obtain $g$ from $f$ after conjugation by an element of $A$.

If $|\Omega_\R|=1$, which is the case for instance if $k=\Q$, we see that any homomorphism on the right-hand has image a subgroup of order $\le 2$ in $A$, so we may restrict the question to (abelian!) groups of order two and the arrow above becomes
\[k^\times/(k^\times)^2\simeq \hom(\Gamma,\Z/2\Z)\to\hom(\gal(\bar k_v/k_v),\Z/2\Z)\simeq k_v^\times/(k_v^\times)^2.\]
This map is clearly surjective, as there is only one nontrivial element on the right hand side and it can be obtained by any imaginary (i.e.~non real) quadratic extension of $k$.

\subsection{The group $A$ is abelian}
Here the answer is also positive and well-known to experts. It can be easily deduced from work by Sansuc \cite{Sansuc81}.

\begin{thm}
    If $A$ is abelian, then real approximation holds for $A$, i.e.~the map \eqref{eqn diag map H1} is surjective.
\end{thm}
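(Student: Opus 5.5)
We plan to use Poitou--Tate duality for the finite Galois module $A$, together with the vanishing of real cohomology of odd-order modules. Set $S=\Omega_\R$ and write $A'=\hom(A,\mu)$ for the Cartier dual. The Poitou--Tate exact sequence (in the form valid for any finite set of places, e.g.\ \cite[Thm.~8.6.13]{NSW} or Milne's \emph{Arithmetic Duality Theorems}, I.4.10) gives an exact sequence
\[\H^1(k,A)\to \bigoplus_{v\in S}\H^1(k_v,A)\to \H^1_S(k,A')^D,\]
where $\H^1_S(k,A')$ is the subgroup of classes in $\H^1(k,A')$ that are locally trivial at every place outside $S$. The last arrow is dual to the localization map $\H^1_S(k,A')\to\bigoplus_{v\in S}\H^1(k_v,A')$, via local Tate duality $\H^1(k_v,A)\times\H^1(k_v,A')\to\Q/\Z$. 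Surjectivity of \eqref{eqn diag map H1} is then equivalent to the vanishing of this dual map. This amounts to the following: every $\alpha\in\H^1(k,A')$ that is trivial at all places $v\notin S$ pairs to zero with every family of local classes at real places.

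The key step is to show that such an $\alpha$ is in fact trivial at the real places as well, so that it lies in $\Sha^1(k,A')$. There are two ways to do this. The first is to reduce to $2$-primary parts: since $\H^1(\R,-)$ of an odd-order module vanishes (it is killed by both $2$ and $|A|$), we may assume $A$, hence $A'$, is a $2$-group. The second, cleaner, route is Chebotarev. Pick a finite Galois $L/k$ splitting $A'$ and trivializing $\alpha$. Then $\alpha$ comes from $\H^1(\gal(L/k),A')$. For a real place $v$, complex conjugation $c_v\in\gal(L/k)$ is, by Chebotarev, also the Frobenius of infinitely many finite places $w\notin S$ unramified in $L$. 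The restriction of $\alpha$ to the decomposition group $\langle c_v\rangle$ is the same cocycle at $v$ and at $w$, so local triviality at $w$ forces triviality at $v$. I need to be careful about inflation here, since $\H^1(\langle c\rangle,A')\to\H^1(k_w,A')$ need not be injective. The fix is to note that at an unramified $w$, $\H^1(\gal(k_w^{nr}/k_w),A')\hookrightarrow\H^1(k_w,A')$ is injective by inflation--restriction, and the procyclic group $\hat\Z$ surjects onto $\langle c\rangle$ compatibly. The remaining check is that inflation $\H^1(\Z/2,M)\to\H^1(\hat\Z,M)$ is injective for the cyclic generator mapping to $c$, which again follows from inflation--restriction.

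Once $\alpha\in\Sha^1(k,A')$, its localizations at real places vanish, so the dual map is zero and we obtain surjectivity onto $\bigoplus_{v\in S}\H^1(k_v,A)$, which is the required statement. I expect the main obstacle to be precisely this Chebotarev comparison between real and finite places, which is where the choice of $S$ containing exactly the real places matters. Alternatively, one can cite Sansuc's result \cite{Sansuc81} that the Brauer--Manin obstruction to weak approximation for $\sln/A$ with $A$ abelian is the only one, and combine it with \cite{GLA-Brnr}, which shows that this obstruction does not involve real places.
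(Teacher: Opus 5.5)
Your proposal is correct and is essentially the paper's first proof. In both, Poitou--Tate duality (the paper cites \cite[Lem.~9.2.2]{NSW}) reduces surjectivity to $\Sha^1(k,\Omega_f,\widehat A)=\Sha^1(k,\widehat A)$, and Chebotarev then shows that classes trivial at all finite places are trivial at the real ones; the paper takes this from \cite[Lem.~1.1.(iii)]{Sansuc81}, whereas you prove it directly. Your worry about injectivity is unfounded, since inflation $\H^1(\langle c\rangle,A')\to\H^1(k_w,A')$ in degree one is always injective by inflation--restriction (the kernel acts trivially on $A'$), so the detour through $\gal(k_w^{nr}/k_w)$ is harmless but unnecessary.
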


We give two proofs of this result, using different techniques and classical results.

\begin{proof}[First proof]
Denote by $\Omega_k$ the set of all non-complex places of $k$ (we may ignore complex places in all that follows) and by $\Omega_f$ the set of \emph{finite} places, so that $\Omega_k=\Omega_f\sqcup\Omega_\R$.

Recall that the Tate-Shafarevich group of $A$ is defined as
\[\Sha^1(k,A):=\ker\left(\H^1(k,A)\to\prod_{v\in\Omega_k} \H^1(k_v,A)\right).\]
We will also need the following a priori bigger group
\[\Sha^1(k,\Omega_f,A):=\ker\left(\H^1(k,A)\to\prod_{v\in\Omega_f} \H^1(k_v,A)\right).\]
The classical theorems of local and global duality by Poitou and Tate (see for instance \cite[8.6.10]{NSW}) yield an exact sequence (see \cite[Lem.~9.2.2]{NSW})
\[0\to\Sha^1(k,\widehat A)\to\Sha^1(k,\Omega_f,\widehat A)\to\coker^1(k,\Omega_\R,A)^\vee,\]
where $\coker^1(k,\Omega_\R,A)$ is the cokernel of the map \eqref{eqn diag map H1} (which is a group homomorphism since $A$ is abelian) and the notation $(\cdot)^\vee$ denotes $\hom(\cdot,\Q/\Z)$. Since we intend to prove that this cokernel is trivial, it will suffice to prove that $\Sha^1(k,\Omega_f,\widehat A)=\Sha^1(k,\widehat A)$. In other words, if a class $\alpha\in \H^1(k,\widehat A)$ is trivial over every finite place, then it must be trivial over every real place. But this is a simple consequence of Chebotarev's density theorem, as it was already noticed by Sansuc in \cite[Lem.~1.1.(iii)]{Sansuc81}.
\end{proof}

The second proof uses the fact that real approximation for tori holds. This is proved by Voskresenskii in \cite[Thm.~11.5]{VoskresenskiiToresII}, although he attributes the result to Serre (unpublished). Sansuc confirms this claim in \cite[Cor.~3.5]{Sansuc81}.

\begin{proof}[Second proof]
Let $L/k$ be a Galois extension such that the $\Gamma$-action factors through $\Gamma_{L/k}:=\gal(L/k)$. We may and will assume that $L$ contains the $n$-th roots of unity for $n=|A|$.

Consider the Cartier dual $\widehat A:=\hom(A,\gm)$. It is a finite $k$-group as well, hence it can be seen as an abstract group (non-canonically) isomorphic to $A$, equipped with an action of $\Gamma$ that factors through $\Gamma_{L/k}$. We can always write $\widehat A$ as a quotient by an induced $\Gamma_{L/k}$-module, so that we have an exact sequence of $\Gamma_{L/k}$-modules (and hence of $\Gamma$-modules)
\[0\to \widehat T\to \widehat P\to\widehat A\to 0.\]
Taking the Cartier dual once again, we recover an exact sequence
\[1\to A\to P\to T\to 1,\]
where $P,T$ are algebraic $k$-tori and $P$ is \emph{quasitrivial}. An application of Shapiro's lemma and Hilbert's Theorem 90 tells us that $\H^1(K,P)=0$ for any overfield $K/k$ (we say then that $P$ is \emph{$k$-special}). Then we get a commutative square with surjective horizontal arrows
\[\xymatrix{
    T(k) \ar@{->>}[r]^-{\delta} \ar[d] & \H^1(k,A) \ar[d] \\
    \prod_{v\in\Omega_\R}T(k_v) \ar@{->>}[r]^-{\delta} & \prod_{v\in\Omega_\R} \H^1(k_v,A),
}
\]
where the right vertical arrow is the map \eqref{eqn diag map H1} and the horizonal arrows are the usual connecting maps in cohomology. These maps are locally constant for topological fields, hence the lower horizontal map is continuous for the discrete topology on the right. Thus, if one proves that the image of the left vertical arrow is dense, surjectivity of the right vertical arrow follows. But this last assertion corresponds to real approximation for the torus $T$. This concludes the proof.
\end{proof}

\begin{rem}
Note that in this last proof we constructed the torus $T$ as a quotient $P/A$. The argument used above, based on the fact that $P$ is $k$-special, is essentially the one that proves the equivalence of Questions \ref{question X} and \ref{question H1} if we replace $P$ by $\sln$, which is also a $k$-special group.
\end{rem}

\section{Some useful elementary results}
The following propositions will be used in what follows, but we present them in a separate section as they are very elementary.

\begin{prop}\label{prop: reduction aux 2-Sylow}
Assume that $A$ admits a $\Gamma$-invariant $2$-Sylow subgroup $S$. If $S$ satisfies real approximation, then so does $A$.
\end{prop}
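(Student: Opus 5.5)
The plan is to show that the map $\H^1(k,S)\to\prod_{v\in\Omega_\R}\H^1(k_v,S)$, induced by the $\Gamma$-equivariant inclusion $S\hookrightarrow A$, is surjective on each real factor $\H^1(k_v,S)\to\H^1(k_v,A)$. Once that is known, the statement follows from a commutative square. Indeed, since $S$ is $\Gamma$-invariant it is a $k$-subgroup of $A$, so functoriality of nonabelian cohomology gives
\[\xymatrix{
\H^1(k,S)\ar[r]\ar[d] & \prod_{v\in\Omega_\R}\H^1(k_v,S)\ar[d]\\
\H^1(k,A)\ar[r] & \prod_{v\in\Omega_\R}\H^1(k_v,A).
}\]
The top arrow is surjective because $S$ satisfies real approximation. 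If the right arrow is also surjective, then the bottom arrow is surjective as well, which is exactly real approximation for $A$.

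The only step that needs work is local surjectivity at a real place $v$. Here $\Gamma_v=\{1,\sigma\}\simeq\Z/2\Z$, and $\H^1(k_v,A)$ is the set of cocycles, namely elements $a\in A$ with $a\,{}^\sigma a=1$, modulo twisted conjugacy $a\sim b^{-1}a\,{}^\sigma b$. Equivalently, the cocycle $a$ corresponds to the element $(a,\sigma)$ of order dividing $2$ in the semidirect product $E:=A\rtimes\Gamma_v$ that lies over $\sigma$, and two cocycles are cohomologous exactly when the corresponding elements are conjugate by an element of $A$.

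Now $S$ is $\sigma$-stable, so $S\rtimes\Gamma_v$ is a subgroup of $E$ of order $2|S|$. This is the full $2$-part of $|E|=2|A|$, so $S\rtimes\Gamma_v$ is a $2$-Sylow subgroup of $E$. Given a cocycle $a$, the element $(a,\sigma)$ generates a $2$-subgroup of $E$. By Sylow's theorem it is conjugate, by some $e\in E$, into $S\rtimes\Gamma_v$. Write $e=(b,\tau)$ with $\tau\in\Gamma_v$. Conjugation by $(1,\sigma)$ preserves $S\rtimes\Gamma_v$, so we may replace $e$ by an element of $A$, i.e.\ take $e=(b,1)$. Conjugation by $A$ preserves the coset $A\sigma$, so the conjugate of $(a,\sigma)$ is $(s,\sigma)$ with $s\in S$. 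This $s$ is a cocycle with values in $S$ that is cohomologous in $A$ to $a$, which proves that $\H^1(k_v,S)\to\H^1(k_v,A)$ is surjective.

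I expect the only delicate point to be the bookkeeping in this last step: ensuring that the conjugating element can be taken in $A$ rather than in $E$, and that conjugation keeps the element over $\sigma$, so that the twisted-conjugation description of $\H^1$ is respected. Both are handled by the two observations above.
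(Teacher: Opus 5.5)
Your proof is correct. The global reduction is exactly the paper's: the commutative square reduces everything to surjectivity of $\H^1(k_v,S)\to\H^1(k_v,A)$ at each real place. (Your opening sentence misnames this map, but the diagram you draw is the right one.)

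The difference is in the local step. The paper quotes Serre's lifting criterion \cite[Chap.~I, \S{5.4}, Prop.~37]{Serre1994Galois}: $[a]$ comes from $\H^1(\R,S)$ if and only if the twisted finite \'etale $\R$-scheme ${}_a(A/S)$ has a real point. It then observes that an \'etale $\R$-algebra of odd dimension $[A:S]$ must have a factor $\R$.

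You instead identify $\H^1(\R,A)$ with $A$-conjugacy classes of order-$2$ elements of $E=A\rtimes\Gamma_v$ lying over $\sigma$. You then apply Sylow's theorem to the Sylow $2$-subgroup $P=S\rtimes\Gamma_v$. Your passage to a conjugating element in $A$ is legitimate. Since $E=P\cdot A$, any $e$ factors as $e=p\,b$ with $p\in P$ and $b\in A$, and $p$ normalizes $P$.

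The two arguments are the same parity count in different clothing. Under $E/P\cong A/S$, the element $(a,\sigma)$ acts by $xS\mapsto a\,{}^\sigma x\,S$, and its fixed points are precisely the real points of ${}_a(A/S)$. Sylow's conjugacy theorem is itself proved by counting fixed points of a $2$-group on a set of odd size.

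The paper's version is shorter and geometric, and it fits the twisting language used elsewhere in the paper. Yours is fully self-contained and uses nothing beyond Sylow's theorem and the explicit description of $\H^1(\Z/2\Z,A)$.
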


\begin{rem}
The hypothesis on real approximation for $S$ is superfluous, assuming the result claimed by Demeio that we mentioned in the introduction \cite[Theorem 1.1]{Demeio2026Solvable}.
\end{rem}

\begin{proof}
We have the commutative square
\[\xymatrix{
\H^1(k,S) \ar[r] \ar[d] & \H^1(k,A) \ar[d] \\
\prod_{v\in\Omega_\R} \H^1(k_v,S) \ar[r] & \prod_{v\in\Omega_\R} \H^1(k_v,A),
}\]
and we are assuming that the vertical arrow on the left is surjective. It will suffice then to prove that the lower horizontal arrow is surjective, which we can do place by place.

Fix then a completion $k_v=\R$ and consider then a class $\alpha\in \H^1(\R,A)$, given by a cocycle $a:\Gamma_{\R}\to A$. By \cite[Chap.~I, \S{5.4}, Prop.~37]{Serre1994Galois}, we know that $\alpha$ is in the image of $\H^1(\R,S)$ if and only if the twisted homogeneous space $_{a}(A/S)$ has an $\R$-point. But this is an \'etale $\R$-scheme of odd degree, that is, it corresponds to $\spec(B)$ for some finite, separable $\R$-algebra $B$ of odd dimension over $\R$. Such an algebra is simply a product of fields isomorphic to either $\C$ or $\R$. Since the degree is odd, there must be at least one copy of $\R$, which implies that the scheme has an $\R$-point. This concludes the proof.
\end{proof}

The following result gives a condition under which such Sylow subgroups can exist.

\begin{prop}\label{prop: il existe un 2-Sylow}
Let $p$ be a prime number and $A$ be a $k$-group. Assume that the action of $\Gamma$ on $A$ factors through a finite quotient which is a $p$-group. Then there exists a $\Gamma$-stable $p$-Sylow subgroup $S$ of $A$
\end{prop}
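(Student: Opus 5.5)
The plan is a fixed-point counting argument on the set of $p$-Sylow subgroups. Let $Q$ be a finite quotient of $\Gamma$ that is a $p$-group and through which the action of $\Gamma$ on $A$ factors. Then $Q$ acts on $A$ by group automorphisms. Let $\mathrm{Syl}_p(A)$ denote the set of $p$-Sylow subgroups of the abstract finite group $A$. Since automorphisms send $p$-Sylow subgroups to $p$-Sylow subgroups, $Q$ (and hence $\Gamma$) acts on $\mathrm{Syl}_p(A)$. A subgroup $S$ is $\Gamma$-stable exactly when it is a fixed point of this action, so it suffices to find such a fixed point.

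The key input is Sylow's theorem: $|\mathrm{Syl}_p(A)|\equiv 1 \pmod p$. Since $Q$ is a $p$-group, every orbit of $Q$ on this finite set has cardinality a power of $p$. Hence the set of fixed points has cardinality congruent to $|\mathrm{Syl}_p(A)|\equiv 1\pmod p$. In particular it is nonempty, which gives a $Q$-fixed (equivalently, $\Gamma$-fixed) $p$-Sylow subgroup $S$.

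Finally I will note that such an $S$ is a $k$-subgroup of $A$, that is, a $\Gamma$-stable subgroup of $A(\bar k)$. This is exactly the notion used in Proposition \ref{prop: reduction aux 2-Sylow}. There is no real obstacle here. The only point needing care is that we use Sylow's congruence rather than mere conjugacy of Sylow subgroups. Alternatively, one can form the semidirect product $A\rtimes Q$ and take a $p$-Sylow subgroup $P\supseteq Q$ of it; then $P\cap A$ is a $p$-Sylow subgroup of $A$ normalized by $Q$. Either argument works, and I would present the counting one.
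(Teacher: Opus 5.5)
Your proof is correct and is essentially the paper's own argument: the $p$-group quotient acts on the set of $p$-Sylow subgroups, every orbit has $p$-power size, and since the number of Sylow subgroups is prime to $p$ (you use the sharper congruence $\equiv 1 \pmod p$), some orbit must be a singleton. The semidirect-product alternative you mention is also valid, but it is not needed.
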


\begin{proof}
    Let $\Gamma'$ be a finite quotient of $\Gamma$ that acts on $A$ and is a $p$-group. Then this quotient acts on the set of $p$-Sylow subgroups of $A$. Since $\Gamma'$ is a $p$-group, every orbit of this action must have a cardinal that is a power of $p$. On the other hand, since the number of Sylow subgroups is coprime to $p$, the classical orbit formula applied to this action tells us that at least one of these powers must be $1$, which yields an invariant Sylow subgroup.
\end{proof}

Recall that a $k$-group $A$ is said to be \emph{supersolvable} if there exists a chain of normal $k$-subgroups (i.e. $\Gamma$-stable subgroups) of $A$
\[\{1\}=A_0\subseteq A_1\subseteq \cdots\subseteq A_{n-1}\subseteq A_n=A,\]
such that $A_i/A_{i-1}$ is cyclic for every $1\leq i\leq n$.

When we restrict the definition to constant groups, supersolvable groups include nilpotent groups and dihedral groups, but not every solvable group. For instance, the alternating group $A_4$ is not supersolvable, even with a trivial $\Gamma$-action.

Note also that not every abelian $k$-group is supersolvable. Indeed, it suffices to consider the abstract group $A=(\Z/2\Z)^2$ with an action of $\Gamma$ via a cyclic quotient of order 3 that permutes the three nontrivial elements in $A$. Then the $k$-group does not admit any nontrivial $\Gamma$-stable (normal) subgroup, so it is impossible to obtain cyclic quotients.

The following result provides a family of non-constant supersolvable groups.

\begin{prop}\label{prop: 2-2-groupe hyperresoluble}
Let $p$ be a prime number and $A$ be a $k$-group of $p$-primary order. Assume that the action of $\Gamma$ on $A$ factors through a finite quotient that is also a $p$-group. Then the $k$-group $A$ is supersolvable.
\end{prop}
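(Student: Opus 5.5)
The plan is to form the semidirect product of $A$ with the finite $p$-group acting on it, use the standard fact that in a finite $p$-group every normal subgroup has a refinement through normal subgroups of index $p$ in one another, and restrict that chain to $A$.

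\emph{Setup.} Let $\Gamma'$ be a finite $p$-group quotient of $\Gamma$ through which the action on $A$ factors. Form the semidirect product $G:=A\rtimes\Gamma'$. It has order $|A|\cdot|\Gamma'|$, which is a power of $p$, so $G$ is a finite $p$-group. Moreover $A$ is a normal subgroup of $G$. A subgroup $B\subseteq A$ is $\Gamma$-stable and normal in $A$ if and only if it is normalized by both $A$ and $\Gamma'$ inside $G$, that is, if and only if $B$ is normal in $G$. So the statement reduces to producing a chain of subgroups of $A$, each normal in $G$, with cyclic successive quotients.

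\emph{Key step.} We use the standard fact that a finite $p$-group $G$ has a chain of subgroups, each normal in $G$, passing through any given normal subgroup, with all successive indices equal to $p$. We prove this by induction on $|N|$ for a nontrivial normal subgroup $N\trianglelefteq G$. Since $G$ acts on $N$ by conjugation, the class equation shows that $N\cap Z(G)\neq 1$: the non-central orbits have size divisible by $p$, and $|N|$ is divisible by $p$. Choose an element of order $p$ in $N\cap Z(G)$ and let $C$ be the subgroup it generates. Then $C$ is central, hence normal in $G$, and has order $p$.

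Passing to $G/C$, which is again a $p$-group, the image $N/C$ is normal in $G/C$. By induction, there is a chain of subgroups of $N/C$, each normal in $G/C$, with successive indices $p$. Pulling this chain back to $G$ and prepending $1\subseteq C$ gives a chain
\[\{1\}=A_0\subseteq A_1=C\subseteq\cdots\subseteq A_n=N\]
of subgroups normal in $G$, with every successive quotient of order $p$, hence cyclic.

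\emph{Conclusion.} Applying this with $N=A$, each $A_i$ is normal in $G$, hence is a $\Gamma$-stable normal subgroup of $A$. Each quotient $A_i/A_{i-1}$ is cyclic of order $p$, so $A$ is supersolvable as a $k$-group.

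The only substantive point is the nontriviality of $N\cap Z(G)$. That is exactly where the $p$-group hypothesis on the Galois action is used: the counterexample with $(\Z/2\Z)^2$ and an action of order $3$ fails precisely because $G$ is then not a $p$-group.
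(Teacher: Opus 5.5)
Your proof is correct and is essentially the paper's argument. The paper also inducts on $|A|$ by quotienting out a central $\Gamma$-stable cyclic subgroup, found as a nontrivial $\Gamma'$-fixed point of $Z(A)$ by orbit counting, and your $A\cap Z(G)$ is exactly $Z(A)^{\Gamma'}$. Passing through $G=A\rtimes\Gamma'$ merges the paper's two counting steps (nontrivial center, then a fixed point on $Z(A)\setminus\{1\}$) into a single class equation. It also gives the slightly stronger conclusion that every factor has order $p$.
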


\begin{proof}
Assume that $A$ admits a central, $\Gamma$-stable cyclic subgroup $C$ and consider the quotient $A/C$. We claim that if $A/C$ is supersolvable, then so is $A$. Indeed, by definition, there exists normal subgroups
\[\{1\}=B_0\subseteq B_1\subseteq \cdots\subseteq B_{n-1}\subseteq B_n=A/C,\]
such that $B_i/B_{i-1}$ is cyclic for every $1\leq i\leq n$. Write $\pi:A\to A/C$ the canonical projection and define
\begin{itemize}
    \item $A_0:=\{1\}$;
    \item $A_i:=\pi^{-1}(B_{i-1})$ for $1\leq i\leq n+1$.
\end{itemize}
Then each $A_i$ is $\Gamma$-invariant and is normal in $A$ as it is the preimage of a normal subgroup of $A/C$ via a $\Gamma$-equivariant morphism. Moreover, we have $A_1=C$, $A_{n+1}=A$ and we have inclusions
\[\{1\}=A_0\subseteq A_1\subseteq \cdots\subseteq A_{n}\subseteq A_{n+1}=A,\]
where $A_i/A_{i-1}=B_{i-1}/B_{i-2}$ is cyclic for every $2\leq i\leq n$, while $A_1/A_0=C$, which is also cyclic. This proves that $A$ is supersolvable.\\

By induction on the cardinal of $A$ and since a trivial group is obviously supersolvable, we are reduced to proving that, when $A$ satisfies the hypotheses of the proposition, there exists a central, $\Gamma$-stable cyclic subgroup. 

Now, since $A$ is a $p$-group, its center $Z$ is nontrivial and characteristic in $A$, hence $\Gamma$-stable. As in the previous proof, let $\Gamma'$ be a finite quotient of $\Gamma$ that acts on $A$ and is a $p$-group. Then this quotient acts on $Z$ and hence on the set of \emph{nonzero} elements in $Z$. But the cardinal of this set is coprime to $p$, so the same classical argument tells us that this action has at least one invariant element. The cyclic subgroup $C$ generated by this element is then normal (since it is central) and clearly $\Gamma$-stable.
\end{proof}

\section{More advanced results}

\subsection{Groups split by $2$-extensions}

We present now the main result of this note, which depends on the elementary results from the previous section and on a deep result by Harpaz and Wittenberg in \cite{HW}.

\begin{thm}\label{thm: main}
    Let $A$ be a finite $k$-group on which the action of $\Gamma$ factors through a finite quotient which is a $2$-group. Then $A$ satisfies real approximation.
\end{thm}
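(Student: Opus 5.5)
The plan is to reduce to the case of a $2$-group with a $2$-group Galois action, and then to invoke Harpaz--Wittenberg for supersolvable stabilizers. By hypothesis, the action of $\Gamma$ on $A$ factors through a finite quotient $\Gamma'$ which is a $2$-group. Proposition \ref{prop: il existe un 2-Sylow} (with $p=2$) therefore yields a $\Gamma$-stable $2$-Sylow subgroup $S\subseteq A$. By Proposition \ref{prop: reduction aux 2-Sylow}, it then suffices to prove real approximation for the $k$-group $S$.

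The $k$-group $S$ has $2$-primary order, and the action of $\Gamma$ on it still factors through $\Gamma'$, a $2$-group. By Proposition \ref{prop: 2-2-groupe hyperresoluble}, $S$ is therefore a supersolvable $k$-group. Now embed $S$ into $\sln$ and consider $X=\sln/S$. The main theorem of Harpaz--Wittenberg \cite{HW} states that, for homogeneous spaces of $\sln$ with supersolvable finite stabilizer, the Brauer--Manin obstruction is the only obstruction to weak approximation. Real approximation is the special case of approximating at the finite set $\Omega_\R$.

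The remaining step, which I expect to be the main obstacle, is to check that the Brauer--Manin obstruction is empty at the real places. For this I would use the third author's result \cite{GLA-Brnr}: the unramified Brauer group of $X$ pairs trivially with real points. More precisely, for each $v\in\Omega_\R$, the evaluation of any element of $\mathrm{Br}_{\mathrm{nr}}X$ is constant on $X(k_v)$. It is only in this sense that the obstruction "does not interact with real places."

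The argument then runs as follows. Take any family $(x_v)_{v\in\Omega_\R}$ of real points. Complete it with a point $x_0\in X(k)$, which exists since $S$ maps to the trivial class, and use the $k_w$-components of $x_0$ at all other places. The resulting adelic point is orthogonal to the Brauer group, because at real places we may replace $x_v$ by the image of $x_0$ without changing the evaluations. Weak approximation with Brauer--Manin obstruction then produces rational points arbitrarily close to $(x_v)_{v\in\Omega_\R}$. This gives density in $\prod_{v\in\Omega_\R}X(k_v)$, hence real approximation for $S$ by the equivalence of Questions \ref{question X} and \ref{question H1}, and so for $A$.

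The only delicate point is matching the precise form of the Harpaz--Wittenberg statement, namely which Brauer group (algebraic or unramified) governs the obstruction, with the form of the constancy result at real places.
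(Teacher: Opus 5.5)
Your proposal is correct and follows the paper's proof step by step. The paper obtains a $\Gamma$-stable $2$-Sylow via Proposition \ref{prop: il existe un 2-Sylow}, reduces to it via Proposition \ref{prop: reduction aux 2-Sylow}, gets supersolvability from Proposition \ref{prop: 2-2-groupe hyperresoluble}, and concludes with \cite[Thm.~B]{HW} together with \cite[Thm.~6.1]{GLA-Brnr}. The only difference is that you unpack that last citation: constancy of unramified Brauer evaluations at real places, combined with reciprocity for the adelic point built from $x_0$, is exactly what the paper takes from \cite{GLA-Brnr} without spelling it out.
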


Note that this result includes all constant $k$-groups.

\begin{proof}
By Proposition \ref{prop: il existe un 2-Sylow}, we know that there exists a $\Gamma$-stable $2$-Sylow subgroup $S$ of $A$. By Proposition \ref{prop: reduction aux 2-Sylow}, it will suffice then to prove the result for $S$. Now, by Proposition \ref{prop: 2-2-groupe hyperresoluble}, this group is supersolvable. Then we know by \cite[Thm.~B]{HW} that the Brauer--Manin obstruction to weak approximation is the only obstruction. By \cite[Thm.~6.1]{GLA-Brnr}, this implies that $X=\sln/S$ satisfies real approximation. This concludes the proof.
\end{proof}

This is as far as we can go without imposing any conditions on the structure of the abstract group $A$. In the following sections we present some results that relax the hypothesis on the action of $\Gamma$, but at the price of imposing conditions on the group $A$.

\subsection{Reminders on nonabelian Galois cohomology} \label{subsec: recall nonabelian cohomology}

Let $A$ be a finite $k$-group. Then $\Gamma = \gal(\bar{k}/k)$ acts on $\aut(A)$ via the formula
    \begin{equation*}
        (\tensor[^s]{\phi}{})(a) := \tensor[^s]{(\phi(}{}\tensor[^{s^{-1}}]{a}{}))
    \end{equation*}
To each cocycle $\sigma: \Gamma \to \aut(A)$ (that is, $\sigma_{st} = \sigma_s \circ \tensor[^s]{\sigma}{_t}$ for all $s,t \in \Gamma$) is associated a {\em twisted $k$-form} $\tensor[_{\sigma}]{A}{}$ of $A$, on which $\Gamma$ acts via $(s,a) \mapsto \tensor[^s]{\sigma}{}(a)$. Such a $k$-form is called {\em inner} if $\sigma$ takes values in $\int(A) \cong A/Z(A)$. If furthermore $\sigma$ is the image of a cocycle (also denoted by) $\sigma:\Gamma \to A$, then we have a {\em twisting bijection}
    \begin{equation*}
        \tw_\sigma: \H^1(k,\tensor[_{\sigma}]{A}{}) \to \H^1(k,A), \quad [\tau] \mapsto [\tau\sigma],
    \end{equation*}
which takes the distinguished element of $\H^1(k,\tensor[_{\sigma}]{A}{})$ to $[\sigma]$ (see \cite[Chap.~I, \S{5.3}, Prop.~35 bis]{Serre1994Galois}). In the remainder of the paper, we shall consider centerless groups, so that any inner form of $A$ is a $k$-form twisted by an $A$-valued cocycle.

An {\em outer action} of $\Gamma$ on an abstract group $\bar{A}$ is a continuous homomorphism $\kappa:\Gamma \to \out(\bar{A}):=\aut(\bar{A})/\int(\bar{A})$. It is easily checked that two $k$-forms $A$ and $B$ of $\bar{A}$ induce the same outer Galois action if and only if $B$ is an inner $k$-form of $A$. One can then define the set $\H^2(k,\bar{A},\kappa)$ of nonabelian second Galois cohomology, {\em cf.} \cite[\S{1}]{FSS}. This is not a pointed set, but it is equipped with a distinguished subset of {\em neutral elements}. Given a short exact sequence
    \begin{equation*}
        1 \to A \to B \to C \to 1
    \end{equation*}
of finite $k$-groups, together with a cocycle $\sigma: \Gamma \to C$, one has a natural outer action $\kappa_\sigma: \Gamma \to \out(A)$ and a class $\eta_\sigma \in \H^2(k,A,\kappa_\sigma)$ which is neutral precisely when $[\sigma] \in \H^1(k,C)$ is the image of some element from $\H^1(k,B)$ (see \cite[Chap.~IV, Prop. 2.5.5 and
4.2.8]{Giraud} for a proof using the language of {\em gerbes}).

\subsection{Some centerless groups}

\begin{lem} \label{lem:InvariantUnderInnerForm}
    Let $A$ be a finite centerless $k$-group, and $B$ and inner $k$-form of $A$. Then $A$ satisfies real approximation if and only if $B$ does.
\end{lem}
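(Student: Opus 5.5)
Since $A$ is centerless, $\int(A)\cong A$, so any inner $k$-form $B$ of $A$ is of the form $B={}_\sigma A$ for a cocycle $\sigma:\Gamma\to A$, as recalled in \S\ref{subsec: recall nonabelian cohomology}. The plan is to use the twisting bijection $\tw_\sigma:\H^1(k,{}_\sigma A)\to\H^1(k,A)$, together with its local analogues at each real place, and to check that these bijections are compatible with restriction. By symmetry it suffices to prove one implication. Indeed, $A$ is in turn an inner form of $B$, via the cocycle $\sigma^{-1}$ viewed with values in $B$, and $B$ is again centerless since it is isomorphic to $A$ as an abstract group. So I will assume $A$ satisfies real approximation and deduce it for $B={}_\sigma A$.

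The key step is the commutative square
\[\xymatrix{
\H^1(k,{}_\sigma A) \ar[r]^-{\tw_\sigma} \ar[d] & \H^1(k,A) \ar[d] \\
\prod_{v\in\Omega_\R} \H^1(k_v,{}_\sigma A) \ar[r]^-{\prod\tw_{\sigma_v}} & \prod_{v\in\Omega_\R} \H^1(k_v,A),
}\]
where $\sigma_v$ is the restriction of $\sigma$ to $\Gamma_v\subset\Gamma$, obtained after choosing an embedding $\bar k\hookrightarrow\bar k_v$. Commutativity is immediate at the level of cocycles, because $\tau\mapsto\tau\sigma$ commutes with restriction to the decomposition group. Both horizontal arrows are bijections by \cite[Chap.~I, \S5.3, Prop.~35 bis]{Serre1994Galois}, applied over $k$ and over each $k_v$. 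Note also that ${}_\sigma A\otimes k_v={}_{\sigma_v}(A\otimes k_v)$.

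Surjectivity of the right vertical arrow then gives surjectivity of the left one. Given a family $(\beta_v)$ on the lower left, map it through the bottom bijection, lift the result to a global class $\alpha\in\H^1(k,A)$, and take $\tw_\sigma^{-1}(\alpha)$. This class restricts to $(\beta_v)$ because the square commutes and the bottom arrow is injective.

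I expect no serious obstacle. The only point needing care is verifying that the twisting bijection is natural with respect to restriction to $\Gamma_v$, including the identification of the local twisted form. Both are routine cocycle checks.
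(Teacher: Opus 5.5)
Your proof is correct and is essentially the paper's argument: you write $B={}_\sigma A$ using that $A$ is centerless, and then use the commutative square formed by the global and local twisting bijections. The only difference is cosmetic. Since both horizontal arrows are bijections, the square already gives both implications at once, so your symmetry argument (viewing $A$ as the twist of $B$ by $\sigma^{-1}$) is valid but not needed.
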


\begin{proof}
    Since $A$ is centerless, one has $B = \tensor[_\sigma]{A}{}$ for some cocycle $\sigma: \Gamma \to A$. The twisting bijection $\tw_\sigma: \H^1(k,B) \to \H^1(k,A)$ fits in a commutative diagram
        \begin{equation*}
            \xymatrix{
                \H^1(k,B) \ar[r]^{\tw_\sigma} \ar[d] & \H^1(k,A) \ar[d] \\
                \prod_{v \in \Omega_\R} \H^1(k_v,B) \ar[r]^{\tw_\sigma} & \prod_{v \in \Omega_\R} \H^1(k_v,A).
            }
        \end{equation*}
   Thus, the right vertical map is surjective if and only if the left one is so.
\end{proof}

\begin{cor} \label{cor:InvariantUnderInnerForm}
    Let $A$ be a finite centerless $k$-group. Assume that its Galois action is inner. Then $A$ satisfies real approximation.
\end{cor}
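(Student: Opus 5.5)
The plan is to reduce to the constant group via Lemma \ref{lem:InvariantUnderInnerForm}, and then apply Theorem \ref{thm: main}.

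Let $\bar A$ denote the underlying abstract group $A(\bar k)$, and let $A_0$ be the constant $k$-group with underlying group $\bar A$, so that $\Gamma$ acts trivially on $A_0$. The hypothesis that the Galois action on $A$ is inner means the following. The action homomorphism $\Gamma\to\aut(\bar A)$ composed with the projection to $\out(\bar A)$ is trivial. This is exactly the outer action $\kappa$ induced by $A_0$. By the remark in \S\ref{subsec: recall nonabelian cohomology}, two $k$-forms inducing the same outer action are inner forms of each other, so $A$ is an inner $k$-form of $A_0$.

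Concretely, write $s\mapsto \phi_s\in\int(\bar A)$ for the action of $\Gamma$ on $A$. Since $\Gamma$ acts trivially on $\aut(A_0)$, the cocycle condition for $\phi$ reduces to the homomorphism property $\phi_{st}=\phi_s\circ\phi_t$, which holds because $\phi$ is an action. Thus $\phi$ is a cocycle $\Gamma\to\int(A_0)$ with ${}_\phi A_0=A$. Continuity is automatic, since the action factors through a finite quotient. Because $\bar A$ is centerless, $\int(\bar A)\cong\bar A$. Hence $\phi$ lifts uniquely to a cocycle $\sigma:\Gamma\to A_0$, namely a continuous homomorphism, and $A={}_\sigma A_0$. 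This is the only point that requires care: the identification of "inner Galois action" with being a twist of the constant form by an $A_0$-valued cocycle. Centerlessness is what makes the lift exist; otherwise one would meet an obstruction in $\H^2(k,Z(\bar A))$.

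By Lemma \ref{lem:InvariantUnderInnerForm}, $A$ satisfies real approximation if and only if $A_0$ does. The group $A_0$ is constant, so its Galois action factors through the trivial quotient, which is a $2$-group. Theorem \ref{thm: main} then gives real approximation for $A_0$, and therefore for $A$.
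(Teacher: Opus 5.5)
Your proof is correct and is essentially the paper's argument. The paper's one-line proof implicitly uses Lemma \ref{lem:InvariantUnderInnerForm} to reduce to the constant group, and then applies Theorem \ref{thm: main}. You additionally spell out the lift of the inner Galois action to a homomorphism $\sigma\colon\Gamma\to A_0$, which relies on centerlessness; the paper leaves this step implicit, since it is already absorbed into the proof of the lemma.
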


\begin{proof}
    This is because finite constant $k$-groups satisfy real approximation by Theorem \ref{thm: main}.
\end{proof}

A finite group $A$ is said to be {\em almost complete} if it is centerless and the extension
    \begin{equation*}
        1 \to A \to \aut(A) \to \out(A) \to 1
    \end{equation*}
is split. Almost complete finite simple groups were fully characterized in \cite{LMM}. These include the alternating group $\Afrak_n$ for $n = 5$ or $n \ge 7$ and all 26 sporadic groups.

\begin{prop}\label{prop almost complete}
Let $A$ be a finite, almost complete $k$-group.  Assume that the outer action of $\Gamma$ on $A$ factors through a finite quotient which is a $2$-group. Then $A$ satisfies real approximation.
\end{prop}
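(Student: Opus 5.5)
The idea is to use the splitting of $1\to A\to \aut(A)\to\out(A)\to 1$ to replace $A$ by an inner form whose actual Galois action factors through a $2$-group. Then Theorem \ref{thm: main} and Lemma \ref{lem:InvariantUnderInnerForm} finish the argument.

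Let $\kappa:\Gamma\to\out(A)$ be the outer action induced by the $k$-form $A$. By hypothesis $\kappa$ factors through a finite quotient $\Gamma'$ of $\Gamma$ which is a $2$-group. Since $A$ is almost complete, there is a section $s:\out(A)\to\aut(A)$ that is a group homomorphism. Consider the composite
\[\rho:=s\circ\kappa:\Gamma\to\aut(A).\]
This is a continuous homomorphism, since it factors through the finite group $\Gamma'$. Hence it defines a $k$-form $B$ of the abstract group $\bar A$: the underlying group is $\bar A$ and $\gamma\in\Gamma$ acts by $\rho(\gamma)$. By construction the Galois action on $B$ factors through $\Gamma'$, which is a $2$-group. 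Theorem \ref{thm: main} therefore applies and shows that $B$ satisfies real approximation.

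Next I check that $A$ and $B$ induce the same outer action. The outer action of $B$ is the image of $\rho$ in $\out(A)$, which is $\kappa$ because $s$ is a section. By the fact recalled in \S\ref{subsec: recall nonabelian cohomology}, $B$ is then an inner $k$-form of $A$. Since $A$ is centerless, Lemma \ref{lem:InvariantUnderInnerForm} gives that $A$ satisfies real approximation if and only if $B$ does, which concludes the proof.

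The only delicate step is the claim that the two forms are inner twists of each other. Write the action of $A$ as a continuous homomorphism $\phi:\Gamma\to\aut(\bar A)$; the continuity of $\phi$ is the standard description of $k$-forms. Set $\sigma_\gamma:=\phi(\gamma)\rho(\gamma)^{-1}$. This lies in $\int(\bar A)\cong \bar A$ because both factors map to $\kappa(\gamma)$. A direct computation shows that $\sigma$ is a cocycle for the action of $\Gamma$ on $\aut(\bar A)$ defined by the form $B$, and that twisting $B$ by $\sigma$ recovers $A$.

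Two bookkeeping points need care: the twisting convention $\tensor[^s]{\sigma}{}$ must match the one in \S\ref{subsec: recall nonabelian cohomology}, and one must track which of $A$ and $B$ is the base form. Neither point matters for the conclusion, since being an inner form is a symmetric relation and Lemma \ref{lem:InvariantUnderInnerForm} is an equivalence.
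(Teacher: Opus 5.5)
Your proposal is correct and follows the paper's proof: you compose the outer action $\kappa$ with the splitting $\out(A)\to\aut(A)$ to get an inner $k$-form whose Galois action factors through a $2$-group, apply Theorem \ref{thm: main} to that form, and transfer back via Lemma \ref{lem:InvariantUnderInnerForm} using that $A$ is centerless. Your explicit cocycle $\sigma_\gamma=\phi(\gamma)\rho(\gamma)^{-1}$ correctly justifies the inner-form claim, which the paper takes for granted.
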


\begin{proof}
Let $\kappa: \Gamma \to \out(A)$ be the outer Galois action obtained from the given Galois action of $\Gamma$ on $A$. Since $A$ is almost complete, we have a section $\sigma:\out(A)\to\aut(A)$. The action $\sigma \circ \kappa$ then yields an inner $k$-form $A'$ of $A$ such that the action of $\Gamma$ on $A'$ factors through a finite quotient which is a $2$-group. By Theorem \ref{thm: main}, real approximation holds for $A'$. Since $A$ is centerless, Lemma \ref{lem:InvariantUnderInnerForm} tells us that $A$ also satisfies real approximation.
\end{proof}

Recall that the simple factors of a finite group are the simple groups arising from the Jordan--H\"older decomposition. In the next subsection, we shall extend Proposition \ref{prop almost complete} to the class of antisolvable groups with almost complete simple factors. The proof is by induction, the base case being that for {\em characteristically simple} groups ({\em i.e.} groups whose only characteristic subgroups are the trivial ones).

\begin{lem} \label{lem: characteristically simple}
    Let $A$ be a characteristically simple nonabelian finite $k$-group. Assume that the simple factors of $A$ are almost complete. Then $A$ is almost complete.

    In particular, if we assume moreover that the outer action of $\Gamma$ on $A$ factors through a finite quotient which is a $2$-group, then real approximation holds for $A$.
\end{lem}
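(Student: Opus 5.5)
A characteristically simple finite nonabelian group is a direct power $A\cong T^n$ of a nonabelian simple group $T$. Its only simple factor is $T$, so by hypothesis $T$ is almost complete. The plan is to use the classical description
\[
\aut(T^n)\cong \aut(T)\wr \Sfrak_n=\aut(T)^n\rtimes\Sfrak_n .
\]
This holds because $T$ is nonabelian simple: the minimal normal subgroups of $T^n$ are exactly the $n$ coordinate copies of $T$, so any automorphism permutes them. Under this identification, $\int(A)=T^n$ sits inside $\aut(T)^n$ as $\int(T)^n$, since $T$ is centerless. Hence
\[
\out(A)\cong \out(T)\wr\Sfrak_n=\out(T)^n\rtimes\Sfrak_n .
\]

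\textbf{Step 1: $A$ is centerless.} This is immediate, since $Z(T^n)=Z(T)^n=1$.

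\textbf{Step 2: construct a splitting.} Fix a section $s:\out(T)\to\aut(T)$ of $\aut(T)\to\out(T)$, which exists because $T$ is almost complete. Define
\[
\tilde s\colon \out(T)^n\rtimes\Sfrak_n\to\aut(T)^n\rtimes\Sfrak_n,\qquad ((\varphi_i)_i,\pi)\mapsto((s(\varphi_i))_i,\pi).
\]
I will check that $\tilde s$ is a homomorphism. The point is that $\Sfrak_n$ acts on both $\aut(T)^n$ and $\out(T)^n$ by permuting coordinates, and $s^n\colon \out(T)^n\to\aut(T)^n$ is $\Sfrak_n$-equivariant because it is applied coordinatewise. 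Thus $\tilde s$ is the semidirect product of an equivariant homomorphism with the identity on $\Sfrak_n$, hence a homomorphism. It composes with the projection to give the identity on $\out(A)$. Therefore $1\to A\to\aut(A)\to\out(A)\to1$ splits, and $A$ is almost complete.

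\textbf{Step 3: the ``in particular'' clause.} Once $A$ is known to be almost complete, this is exactly Proposition \ref{prop almost complete}, applied under the extra hypothesis on the outer action.

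\textbf{Main obstacle.} The only nontrivial input is the identification $\aut(T^n)=\aut(T)\wr\Sfrak_n$, and in particular the fact that under it the inner automorphisms are precisely $\int(T)^n$ with no $\Sfrak_n$-component. I would justify this directly. An automorphism must permute the minimal normal subgroups $T_i$: any normal subgroup $N$ of $T^n$ satisfies $[N,T_i]\subseteq N\cap T_i$, which is normal in $T_i$, hence is either $1$ or $T_i$. Composing with the inverse permutation leaves an automorphism that preserves each factor, which is an element of $\aut(T)^n$. The rest of the argument is bookkeeping.
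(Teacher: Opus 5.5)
Your proposal is correct and follows the paper's proof. Both write $A\cong F^n$, use $\aut(A)=\aut(F)^n\rtimes\Sfrak_n$ and $\out(A)=\out(F)^n\rtimes\Sfrak_n$, take the coordinatewise section extended by the identity on $\Sfrak_n$, and invoke Proposition~\ref{prop almost complete} for the second claim. The only differences are that the paper cites Bidwell--Curran--McCaughan for the description of $\aut(F^n)$ while you prove it via minimal normal subgroups, and that you make explicit the centerlessness and equivariance checks the paper leaves implicit.
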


\begin{proof}
    By \cite[Chap.~2, Thm.~1.4]{Gorenstein}, $A = F^n$ (as an abstract group) for some finite simple (nonabelian, centerless) group $F$ and some integer $n \ge 1$. By assumption, there exists a section $\sigma: \out(F) \to \aut(F)$. Now, one has $\aut(A) = \aut(F)^n \rtimes \Sfrak_n$ by \cite[Thm.~3.1]{BCM2006Automorphism}, where the symmetric group $\Sfrak_n$ acts on $\aut(F)^n$ by permuting the coordinates. It follows that $\out(A) = \out(F)^n \rtimes \Sfrak_n$ and hence we have a section
        \begin{equation*}
            \sigma' = (\sigma \times \cdots \times \sigma) \rtimes \id_{\Sfrak_n}: \out(A) \to \aut(A).
        \end{equation*}
    This proves the first assertion. The second one follows by Proposition \ref{prop almost complete}.
\end{proof}

\subsection{Some antisolvable groups}

A finite group $G$ is said to be {\em antisolvable} if its simple factors are nonabelian. In this subsection, we prove

\begin{thm} \label{thm: Antisolvable}
    Let $A$ be a finite antisolvable $k$-group. Assume the following.

    \begin{enumerate}
            \item The simple factors of $A$ are almost complete.

            \item The outer action of $\Gamma$ on $A$ factors through a finite quotient which is a $2$-group.
        \end{enumerate}
    Then $A$ has an inner $k$-form on which the action of $\Gamma$ factors through a finite quotient which is a $2$-group. In particular, real approximation holds for $A$.
\end{thm}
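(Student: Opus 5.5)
Once the first assertion is established, the ``in particular'' follows directly from results already in the paper. Antisolvable groups are centerless: the center is an abelian characteristic subgroup, so a nontrivial center would force an abelian simple factor. Hence Lemma \ref{lem:InvariantUnderInnerForm} transfers real approximation from the inner form given by Theorem \ref{thm: main} back to $A$. So the task is to find an inner $k$-form $A'$ whose Galois action factors through a $2$-group. Equivalently, we need a homomorphism $\phi$ from a finite $2$-group quotient $\Gamma'$ of $\Gamma$ to $\aut(A)$ lifting the outer action $\kappa:\Gamma'\to\out(A)$. Indeed, any two lifts of $\kappa$ differ by an $\int(A)\cong A$-valued cocycle, so $\phi$ defines an inner form of $A$.

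I will argue by induction on $|A|$, proving the stronger statement that every outer action $\kappa$ of a finite $2$-group $P$ on such an $A$ lifts to an honest action $P\to\aut(A)$. The hypotheses pass to the objects used in the induction. Characteristic subgroups and quotients by them are again antisolvable, with simple factors among those of $A$, hence almost complete. If $A$ is characteristically simple, Lemma \ref{lem: characteristically simple} says $A$ is almost complete, so the section $\out(A)\to\aut(A)$ composed with $\kappa$ gives the lift.

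Otherwise, choose a minimal nontrivial characteristic subgroup $N$. This $N$ is characteristically simple (a characteristic subgroup of $N$ is characteristic in $A$), and both $N$ and $A/N$ are antisolvable and centerless. Every automorphism of $A$ preserves $N$, so there are restriction maps $\aut(A)\to\aut(N)$ and $\aut(A)\to\aut(A/N)$, and hence induced outer actions of $P$ on $A/N$ and on $N$. By induction, the outer action on $A/N$ lifts to a $P$-action $\psi$ on $A/N$. This gives a $k$-form of $A/N$ (after choosing the inner form of $A$ appropriately) whose Galois action factors through $P$. The remaining problem is to lift, over this fixed action on $A/N$, to an action on $A$ compatible with $\kappa$.

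I would handle this lifting step with a pullback and a Sylow argument. Form the group $E\subseteq\aut(A)\times P$ of pairs $(\alpha,p)$ with $\alpha$ mapping to $\kappa(p)$ in $\out(A)$. This is an extension $1\to \int(A)\cong A\to E\to P\to 1$. We want a splitting $P\to E$, i.e.\ a complement to $A$. Since $P$ is a $2$-group, a $2$-Sylow subgroup $T$ of $E$ surjects onto $P$, with kernel $T\cap A$ a $2$-Sylow subgroup of $A$. So it suffices to split the extension $1\to T\cap A\to T\to P\to 1$. This is the step I expect to be the main obstacle, because $2$-group extensions need not split, and this is exactly where almost completeness and the inductive structure must enter. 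The first thing I would try is to work in the nonabelian $\H^2$ language of \S\ref{subsec: recall nonabelian cohomology}: the obstruction is a class in $\H^2(P,A,\kappa)$. Using the short exact sequence $1\to N\to A\to A/N\to1$, the image of this class in $\H^2(P,A/N)$ is neutral by induction. The residual obstruction lives over $N$, where the almost complete structure $\aut(N)=\int(N)\rtimes\out(N)$ from Lemma \ref{lem: characteristically simple} should kill it. Concretely, I would build the lift $\phi(p)$ by composing the section for $N$ with a lift of $\psi(p)$, adjusting by inner automorphisms of $N$ to ensure compatibility. This uses that automorphisms of $A$ are determined by their effect on $N$ and $A/N$ up to a term controlled by $C_A(N)$, and I need to verify that $C_A(N)$ causes no extra obstruction.
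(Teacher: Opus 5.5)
Your reduction to lifting the outer action $\kappa\colon P\to\out(A)$ of a finite $2$-group quotient $P$ of $\Gamma$ to a homomorphism $P\to\aut(A)$ is fine. So are the choice of a minimal characteristic subgroup $N$ and the deduction of real approximation from Theorem~\ref{thm: main} and Lemma~\ref{lem:InvariantUnderInnerForm}.

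The gap is the step you flag yourself: you never actually lift from $A/N$ to $A$. The Sylow reduction is a dead end. Splitting $1\to T\cap A\to T\to P\to 1$ merely reformulates the original problem, and a $2$-Sylow subgroup of $A$ keeps none of the structure (centerlessness, almost completeness) that could make it split. The $\H^2$ paragraph only says that almost completeness of $N$ ``should'' kill the residual obstruction, and it leaves an unresolved worry about $C_A(N)$.

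There is also an inaccuracy exactly at the crux. Restriction $\aut(A)\to\aut(N)$ does \emph{not} induce a map $\out(A)\to\out(N)$, because conjugation by elements of $A$ can induce outer automorphisms of $N$. For example, in $\Afrak_5\wr\Afrak_5$ with $N$ the base group, the top group permutes the simple factors. So $\kappa$ alone gives no outer action of $P$ on $N$; one appears only once the action on $A/N$ has been fixed.

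The missing argument is short. Put $Q=A/N$, $E=\aut(A)\times_{\out(A)}P$ and $E_Q=\aut(Q)\times_{\out(Q)}P$.
\begin{enumerate}
\item The natural map $E\to E_Q$ is surjective, since inner automorphisms of $Q$ come from inner automorphisms of $A$. Its kernel is $\{\inn(n):n\in N\}\cong N$, because $Z(Q)=1$ and $Z(A)=1$.
\item Your induction hypothesis gives a section $s(p)=(\psi(p),p)$ of $E_Q\to P$. Let $E'$ be the preimage of $s(P)$ in $E$; it is an extension $1\to N\to E'\to P\to 1$.
\item Conjugation inside $E'$ gives a well-defined outer action $\theta\colon P\to\out(N)$. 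Since $Z(N)=1$, the map $e\mapsto(\mathrm{conj}_e|_N,\bar e)$ is an isomorphism $E'\cong\aut(N)\times_{\out(N)}P$: it is injective by centerlessness, and both sides have order $|N||P|$.
\item $N$ is almost complete by Lemma~\ref{lem: characteristically simple}. Composing a section $\out(N)\to\aut(N)$ with $\theta$ therefore splits $E'$, hence $E\to P$.
\end{enumerate}
Concretely, $\phi(p)$ is the unique automorphism of $A$ lying over $\kappa(p)$, inducing $\psi(p)$ on $Q$ and the prescribed representative on $N$. Uniqueness (two candidates differ by $\inn(n)$ with $n\in Z(N)=1$) gives multiplicativity, and $C_A(N)$ plays no role.

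With this step supplied, your route genuinely differs from the paper's. The paper splits off a characteristically simple \emph{quotient} $C=A/B$ and lifts a cocycle from $C$ to $A$ using the neutrality of the class in $\H^2(k,B,\kappa_\sigma)$ from \cite[Thm.~3]{GLA-Antisolvable}. It then twists by a cocycle into $B$ obtained by induction. Your argument is purely group-theoretic and never uses that $P$ is a $2$-group in the inductive step. Taking $P=\out(A)$, it in fact shows that $A$ is almost complete, so Proposition~\ref{prop almost complete} applies directly.
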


The proof goes by induction on $|A|$ and the base case is the one where $A$ is characteristically simple, which has been dealt with in Lemma \ref{lem: characteristically simple}. In order to proceed with the induction step, we need the following result.

\begin{lem}\label{lem: antisolvable}
Let $A$ be a finite group that is not characteristically simple. Let $B \subseteq A$ be a maximal proper characteristic subgroup of $A$. Then the quotient $C:=B/A$ is characteristically simple.
\end{lem}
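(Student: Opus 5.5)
The statement has an evident typo: the quotient should read $C:=A/B$. I will prove that $A/B$ is characteristically simple, using the standard correspondence between characteristic subgroups of the quotient and subgroups of $A$ containing $B$. The plan is to argue by contradiction. Suppose $C=A/B$ has a characteristic subgroup $\bar D$ with $\{1\}\neq \bar D\neq C$. Let $\pi:A\to C$ be the projection and set $D:=\pi^{-1}(\bar D)$. Then $B\subsetneq D\subsetneq A$, and I will show that $D$ is characteristic in $A$. This contradicts the maximality of $B$ among proper characteristic subgroups.

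The key step is that every automorphism of $A$ descends to $C$. Let $\phi\in\aut(A)$. Since $B$ is characteristic, $\phi(B)=B$. Hence $\phi$ induces a well-defined map $\bar\phi:C\to C$, $aB\mapsto\phi(a)B$. This map is a homomorphism, and it is bijective because $\phi^{-1}$ also preserves $B$ and induces the inverse. So $\bar\phi\in\aut(C)$, with $\pi\circ\phi=\bar\phi\circ\pi$.

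Since $\bar D$ is characteristic in $C$, we get $\bar\phi(\bar D)=\bar D$. Therefore
\[
\pi(\phi(D))=\bar\phi(\pi(D))=\bar\phi(\bar D)=\bar D .
\]
Because $\phi(D)\supseteq\phi(B)=B=\ker\pi$, it follows that $\phi(D)=\pi^{-1}(\bar D)=D$. So $D$ is characteristic in $A$.

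The strict inclusions $B\subsetneq D\subsetneq A$ hold because $\bar D$ is neither trivial nor all of $C$. This gives the contradiction, so $C$ is characteristically simple.

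One should also check that $C$ is nontrivial. This holds because $B$ is a proper subgroup, and the hypothesis that $A$ is not characteristically simple is exactly what guarantees that a proper nontrivial characteristic $B$ exists. There is no real obstacle here. The only point needing care is that automorphisms of $A$ induce automorphisms of $C$, which is where the characteristic property of $B$, and not mere normality, is used.
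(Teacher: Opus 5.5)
Your proof is correct and follows the paper's argument. Automorphisms of $A$ descend to $C=A/B$ (you rightly fix the typo $B/A$), so the preimage of a characteristic subgroup of $C$ is a characteristic subgroup of $A$ containing $B$, and by maximality it must be $B$ or $A$. The only differences are that you phrase this by contradiction and check the descent and the equality $\phi(D)=D$ in more detail.
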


\begin{proof}
Denote by $\pi: A \to C$ the canonical projection. Any automorphism of $A$ leaves $B$ invariant, hence induces an automorphism of $C$. It follows that the preimage $\pi^{-1}(C')$ of any characteristic subgroup $C' \subseteq C$ is a characteristic subgroup of $A$ containing $B$. This implies either $\pi^{-1}(C') = A$ (in which case $C' = C$) or $\pi^{-1}(C') = B$ (in which case $C' = 1$).
\end{proof}

\begin{proof}[Proof of Theorem \ref{thm: Antisolvable}]
We proceed by induction on $|A|$. If $A$ is characteristically simple, we are done by Lemma \ref{lem: characteristically simple}. Otherwise, by Lemma \ref{lem: antisolvable} there exists a characteristic subgroup $B \subseteq A$ such that $C:=B/A$ is characteristically simple. Note that the simple factors of both $B$ and $C$ are also almost complete since together they are precisely those of $A$.

Note that $1 \to B \to A \xrightarrow{\pi} C \to 1$ is a short exact sequence of finite $k$-groups. Since $B$ is characteristic, there is a natural homomorphism $\aut(A) \to \aut(C)$, which induces a homomorphism $\out(A) \to \out(C)$, and the outer action of $\Gamma$ on $C$ is precisely the composite of this homomorphism with the outer action $\Gamma \to \out(A)$. It follows from Lemma \ref{lem: characteristically simple} that $C$ is almost complete and hence, reasoning as in the proof of Proposition \ref{prop almost complete}, we see that there is a cocycle $\sigma: \Gamma \to C$ such that $\Gamma$ acts on $\tensor[_{\sigma}]{C}{}$ through a finite quotient which is a $2$-group. As in Subsection \ref{subsec: recall nonabelian cohomology}, we obtain an outer action $\kappa_\sigma: \Gamma \to \out(B)$ together with a cohomology class $\eta_\sigma \in \H^2(k,B,\kappa_\sigma)$. Now, \cite[Thm.~3]{GLA-Antisolvable} tells us that $\eta_\sigma$ is neutral (in fact, it is the only element of $\H^2(k,B,\kappa_\sigma)$), so we may assume that $\sigma$ is the image of a cocycle $\sigma': \Gamma \to A$.

Twisting by $\sigma'$ does not change the outer Galois action. Thus, we may assume $[\sigma] = 1$. In other words, there are normal subgroups $\Gamma'$ and $\Gamma''$ of $\Gamma$, of $2$-primary index, such that the respective homomorphisms $\Gamma \to \out(A)$ and $\Gamma \to \aut(C)$ factor through $\Gamma/\Gamma'$ and $\Gamma/\Gamma''$. Let $\Delta:=\Gamma' \cap \Gamma''$, which is again a normal subgroup of $\Gamma$ of $2$-primary index. Each element $s \in \Delta$ then acts on $A$ via some inner automorphism $\inn(a_s)$ (where $a_s \in A$). As $s$ acts trivially on $C$, the image $c_s:=\pi(a_s)$ is central in $C$. Since $C$ is centerless, we have $c_s = 1$, or $a_s \in B$. This means $s$ acts on $B$ via an inner automorphism, or equivalently that the homomorphism $\Gamma \to \out(B)$ kills $s$.

We have proved that (after replacing $A$ by an inner $k$-form if needed), the outer action of $\Gamma$ on $B$ factors through a finite quotient $\Gamma/\Delta$ which happens to be a $2$-group. By induction hypothesis, there is a cocycle $\tau: \Gamma \to B$ such that $\Gamma$ acts on $\tensor[_\tau]{B}{}$ through the a finite quotient which is a $2$-group. Twisting $A$ by $\tau$ does not change the outer Galois action on $A$ and the Galois action on $C$. Therefore, we may assume the existence of a subgroup $\Delta' \subseteq \Gamma$ of $2$-primary index such that the homomorphisms $\Gamma \to \aut(B)$, $\Gamma \to \out(A)$, and $\Gamma \to \aut(C)$ factor through $\Gamma/\Delta'$. As above, any $s \in \Delta'$ acts on $A$ via an inner automorphism $\inn(b_s)$ for some $b_s \in B$. But then $b_s = 1$ since $B$ is centerless, so the {\em action} $\Gamma\to \aut(A)$ (not just the outer action $\Gamma \to \out(A)$) factors through the $2$-group $\Gamma/\Delta'$. In particular, Theorem \ref{thm: main} implies the real approximation property for $A$.
\end{proof}

\begin{exe}
The group $A_n$ for $n=5$ or $n\geq 7$ is a simple, almost complete group whose outer automorphism group is a $2$-group. This is also true for all 26 sporadic groups, so that the hypotheses of Theorem \ref{thm: Antisolvable} always hold and we may deduce that these groups satisfy real approximation regardless of the Galois action.

A slightly more general example would be any direct product $A\times B$ of two such groups. Indeed, if $A$ and $B$ are not isomorphic, then $\aut(A\times B)\simeq \aut(A)\times\aut(B)$. On the other hand, we already mentioned in the proof of Lemma \ref{lem: characteristically simple} that $\aut(A^2)\simeq \aut(A)^2\rtimes S_2$. In both cases we see then that the outer automorphism group would be a $2$-group.

In order to construct more involved examples, it is worth noting that in \cite{Bercov} Bercov characterizes anti-solvable finite groups whose composition factors are almost complete as iterated twisted wreath products of their simple factors.
\end{exe}


\begin{thebibliography}{ABCD00}

\bibitem[Ber67]{Bercov}
{\bf R.~Bercov.} On groups without abelian composition factors. {\it J.~Algebra} 5, 106--109, 1967.

\bibitem[BCM06]{BCM2006Automorphism}
{\bf J.~N.~S.~Bidwell, M.~J.~Curran, D.~J.~McCaughan.} Automorphisms of direct products of finite groups. {\it Arch.~Math.} 86, 481-–489, 2006. 

\bibitem[Bor10]{BorovoiMO} {\bf M.~Borovoi}. Real approximation for homogeneous spaces of linear algebraic groups. URL (version: 2010-12-28): \url{https://mathoverflow.net/q/50570}

\bibitem[CT03]{ColliotBudapest}
{\bf J.-L.~Colliot-Th{\'e}l{\`e}ne.} Points rationnels sur les fibrations. {\it Higher dimensional varieties and rational points ({B}udapest, 2001)}, 171--221, Bolyai Soc. Math. Stud., 12, {\it Springer, Berlin}, 2003.

\bibitem[Dem26]{Demeio2026Solvable}
{\bf J.~L.~Demeio} {\it Solvable descent and the Grunwald problem for solvable groups}, Preprint, 103 pages, 2026. Available at \href{https://arxiv.org/abs/2604.18099}{arXiv:2604.18099}.

\bibitem[FSS98]{FSS}
{\bf Y.~Z.~Flicker, C.~Scheiderer, R.~Sujatha} Grothendieck's theorem on non-abelian $H^2$ and local-global principles. {\it J.~Amer.~Math.~Soc.} 11:3, 731--750, 1998.

\bibitem[Gir71]{Giraud}
{\bf J.~Giraud.} {\it Cohomologie non ab\'elienne}. Grundlehren der Mathematischen Wissenschaften, No. {\bf 179}. Springer Berlin Heidelberg, 1971.

\bibitem[Gor80]{Gorenstein}
{\bf D.~Gorenstein.} {\it Finite groups}. AMS Chelsea Publishing Series, No. {\bf 301}. Amer. Math. Soc., second edition, 1980.

\bibitem[Har07]{HarariBulletinSMF}
{\bf D.~Harari.} Quelques propri\'et\'es d'approximation reli\'ees \`a la cohomologie galoisienne d'un groupe alg\'ebrique fini. {\it Bull.~Soc.~Math.~France} 135(4), 549--564, 2007.

\bibitem[HW20]{HW}
{\bf Y.~Harpaz, O.~Wittenberg.} Z\'ero-cycles sur les espaces homog\`enes et probl\`eme de Galois inverse. {\it J.~Amer.~Math.~Soc.} 33, 775--805, 2020.

\bibitem[Lin24]{Linh-Padic}
{\bf N.~M.~Linh.} Arithmetics of homogeneous spaces over $p$-adic function fields. {\it J.~London Math.~Soc} 109(1), e12842, 2024.

\bibitem[LMM03]{LMM}
{\bf A.~Lucchini, F.~Menegazzo, M.~Morigi}, On the existence of a complement for a finite simple group in its automorphism group, {\it Ill.~J.~Math.} 47(1-2), 395--418, 2003. 

\bibitem[LA14]{GLA-AFPHEH}
{\bf G.~Lucchini {A}rteche.} Approximation faible et principe de Hasse pour des espaces homog\`enes \`a stabilisateur fini r\'esoluble. {\it Math.~Ann.} 360, 1021--1039, 2014.

\bibitem[LA19]{GLA-Brnr}
{\bf G.~Lucchini {A}rteche.} The unramified Brauer group of homogeneous spaces with finite stabilizer. {\it Trans.~Amer.~Math.~Soc.} 372, 5393--5408, 2019.

\bibitem[LA22]{GLA-Antisolvable}
{\bf G.~Lucchini {A}rteche.} On homogeneous spaces with finite anti-solvable stabilizers. {\it C.~R.~Math.} 360, 777--780, 2022.

\bibitem[NSW08]{NSW}
{\bf J.~Neukirch, A.~Schmidt, K.~Wingberg.} {\it Cohomology of number fields}. Grundlehren der Mathematischen Wissenschaften, No. {\bf 323}.
Springer-Verlag, Berlin, second edition, 2008.

\bibitem[San81]{Sansuc81}
{\bf J.-J.~Sansuc.} Groupe de {B}rauer et arithm\'etique des groupes alg\'ebriques lin\'eaires sur un corps de nombres. {\it J.~Reine Angew.~Math.} 327, 12--80, 1981.

\bibitem[Ser94]{Serre1994Galois}
{\bf J.-P.~Serre.} {\it Cohomologie galoisienne}, Lecture Notes in Mathematics, No.~{\bf 5}. Springer, Berlin, Heidelberg, fifth edition, 1994.

\bibitem[Vos98]{VoskresenskiiToresII}
{\bf V.~E.~Voskresenski{\u{\i}}.} {\it Algebraic groups and their birational invariants.} Transl.~Math.~Monogr., No.~{\bf 179}. American Mathematical Society, Providence, RI, 1998.

\end{thebibliography}
\end{document}